\begin{document}


\def\ds{\displaystyle}
\def\arr{{\longrightarrow}}
\def\colim{\mathop{\rm colim}\nolimits}

\def\intertitle#1{

\medskip

{\em \noindent #1}

\smallskip
}

\newtheorem{thm}{Theorem} 
\newtheorem{theorem}[thm]{Theorem}
\newtheorem{lemma}[thm]{Lemma}
\newtheorem{corollary}[thm]{Corollary}
\newtheorem{proposition}[thm]{Proposition}
\newtheorem{example}[thm]{Example}

\theoremstyle{definition}
\newtheorem{definition}[thm]{Definition}
\newtheorem{point}[thm]{}
\newtheorem{exercise}[thm]{Exercise}

\makeatletter
\let\c@equation\c@thm
\makeatother
\numberwithin{equation}{section}



\newcommand{\comment}[1]{}


\title{Every countable group is the fundamental group of some compact subspace of $\mathbb{R}^4$}
\author{Adam J. Prze\'zdziecki$^1$}
\address{Warsaw University of Life Sciences - SGGW, Warsaw, Poland}
\email{adamp@mimuw.edu.pl}

\maketitle
\begin{center}
\today
\end{center}

\comment{
\begin{center}
\vspace{-0.7cm}
{\small \it Institute of Mathematics, Warsaw University, Warsaw, Poland \\
Email address: \verb+adamp@mimuw.edu.pl+ } \\
\end{center}
}


\begin{abstract}
For every countable group $G$ we construct a compact path connected subspace $K$ of $\mathbb{R}^4$ such that $\pi_1(K)\cong G$. Our construction is much simpler than the one found recently by Virk.

\vspace{4pt}
  {\noindent\leavevmode\hbox {\it MSC:\ }}
  {55Q05} \\
  {\em Keywords:} Fundamental group, compacta.
  \vspace{-9pt}

\end{abstract}


\vspace{25pt}

The purpose of this note is to present a shorter proof of the following.

\begin{theorem}[Virk {\cite[Theorem 2.20]{virk}}]\label{theorem-main}
  For every countable group $G$ there exists a compact path connected subspace $K\subseteq\mathbb{R}^4$ such that $\pi_1(K)\cong G$.
\end{theorem}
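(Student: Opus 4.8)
My plan is to start from an arbitrary countable presentation, realize it by an explicitly built compact $2$-complex sitting in $\mathbb{R}^4$, and then compute the fundamental group by comparing the space with its finite subcomplexes. Since $G$ is countable, fix a presentation $G=\langle s_1,s_2,\dots \mid r_1,r_2,\dots\rangle$ with countably many generators and relators, each $r_j$ a finite word. For each $n$ set $G_n=\langle s_1,\dots,s_n \mid r_j : j\le n,\ r_j\in F_n\rangle$, where $F_n$ is free on $s_1,\dots,s_n$. These are finitely presented, the obvious maps $G_n\to G_{n+1}$ form a directed system, and $\colim_n G_n\cong G$. I would realize each $G_n$ by its finite presentation complex $K_n$: a wedge of $n$ circles with finitely many $2$-cells attached along the available relators.

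Next I would assemble the $K_n$ into a single compact subspace $K\subseteq\mathbb{R}^4$ as a null sequence converging to one point $p$. Concretely, place the generator circles in a plane $\mathbb{R}^2\times 0$ and, using the two transverse directions of $\mathbb{R}^4$, attach each relator disk as an embedded $2$-disk at its own height; that a $2$-disk embeds in $\mathbb{R}^4$ with a prescribed (non-embedded) boundary word is a routine general-position construction, and distinct heights keep the disks disjoint off the $1$-skeleton. Shrinking the successive ``new'' pieces — the circle for $s_n$ and the disks first available at stage $n$ — into pairwise disjoint balls $B_n$ with $\mathrm{diam}(B_n)\to 0$ and $B_n\to p$, and joining them to a base arc, produces a bounded set whose closure $K=\{p\}\cup\bigcup_n(\text{stage }n)$ is compact, path connected and two-dimensional.

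Away from $p$ the space is a CW complex, and van Kampen identifies $\pi_1$ of each finite stage $K_n$ with $G_n$, the inclusions inducing the maps $G_n\to G_{n+1}$; hence the inclusions $K_n\hookrightarrow K$ assemble into a natural homomorphism $\varphi\colon G=\colim_n G_n\to\pi_1(K,p)$. The entire proof reduces to showing that $\varphi$ is an isomorphism. Both directions hinge on a single finiteness statement: that every loop in $K$ is homotopic to a \emph{finite} word in the $s_i$ (surjectivity), and that any homotopy between such words can be taken to live in a finite stage, so that no relation beyond the normal closure of the $r_j$ is forced (injectivity). Note that injectivity is not automatic from compactness alone, since a compact subset of $K$ may still meet infinitely many stages near $p$.

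The heart of the argument, and the only genuinely hard point, is the behaviour at the accumulation point $p$. As with the Hawaiian earring, a loop may a priori wind around the circles $s_n$ for arbitrarily large $n$ as it approaches $p$, representing an ``infinite product'' that is not an element of $G$; since $G$ is countable, such wild elements must be excluded entirely, for otherwise $\pi_1(K)$ would be uncountable. The crux is therefore a finiteness lemma asserting that, for this particular distribution of pieces near $p$, every loop is homotopic to one supported on finitely many of them. This is exactly where a naive shrinking wedge fails, so the construction must be arranged — if necessary by adding auxiliary generators and relators that do not change $G$ — so that the relator disks accumulating at $p$ rewrite any infinite tail of a loop into a finite word, while a diameter/continuity estimate limits how a loop can use the small high-index pieces. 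Establishing this lemma, ruling out new wild elements while preserving every relation of $G$, is where essentially all the work lies; by comparison the embedding into $\mathbb{R}^4$ and the van Kampen computations on the finite stages are routine.
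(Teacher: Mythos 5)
There is a genuine gap, and you have located it yourself: the ``finiteness lemma'' asserting that every loop in your space is homotopic to one supported on finitely many stages is announced as the crux and then left entirely unproved, and with the architecture you propose it is not merely hard but false-prone. If the generator circle for $s_n$ and the relator disks first available at stage $n$ are shrunk into balls $B_n$ with $\mathrm{diam}(B_n)\to 0$ accumulating at a single point $p$, you have manufactured a Hawaiian-earring singularity: continuous loops genuinely traversing infinitely many of the small circles do exist, $\pi_1(K,p)$ acquires uncountably many ``infinite word'' elements, and your suggested remedy --- relator disks accumulating at $p$ that ``rewrite any infinite tail into a finite word'' --- is essentially the harmonic archipelago, whose fundamental group is uncountable and nothing like $\colim_n G_n$. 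So the single step on which the whole argument rests is missing, and the most natural way to supply it within your setup does not work. (A secondary issue: the claim that each relator disk embeds in $\mathbb{R}^4$ with prescribed boundary ``by general position'' must also keep each disk disjoint from the \emph{closure} of all the other cells, which is not automatic once everything accumulates at one point.)

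The paper avoids this problem rather than solving it, and the comparison is instructive. Its generator loops $g_n$ are triangles of diameter about $1$ that accumulate on the contractible arc $I\times\{0\}^3$, not on a point: any essential traversal of $g_n$ must reach the far wall $x_1=1$, so a continuous loop can traverse only finitely many of them, $\pi_1(V)$ is honestly free on the $g_n$, and passing to the closure creates no wild elements. Likewise the relator pieces $R_n$ accumulate on the simply connected ``compactification wall'' $I^2\times\{0\}\times I$, which is deliberately included in $K$ so that $K$ is closed without changing $\pi_1$. Finally, the presentation is first normalized (relators of length at most $3$, each generator occurring in only finitely many relators) precisely so that the fattened generators $g_n\times[1/m(n),1]\times\{0\}$ form a closed set to which the $R_n$ can attach. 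To rescue your approach you would have to either prove the finiteness lemma for your specific configuration --- which is the entire difficulty of the theorem --- or rearrange the accumulation so that the infinite families of cells converge onto subsets that are already simply connected inside $K$, which is exactly what the paper's construction does.
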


This answered a problem posed by Pawlikowski \cite{pawlikowski}. The context and motivation was nicely outlined by Virk so we confine ourselves to presenting the proof.

Let $I=[0,1]$ be the unit interval. We construct $K$ inside $I^4$. We describe it by drawing two dimensional sections and projections of components of $K$. If $(x_1,x_2,x_3,x_4)$ parametrize $I^4$ then $(x_i,x_j)$ above every set of figures below indicate which coordinates parametrize the sections shown, the first being always horizontal. The remaining two coordinates are constant.

Let $(0,0,0,0)$ be the basepoint. Define a loop $g_n$ as the boundary of the triangle, as in Figure (A), with vertices $(0,0,0,0)$, $(1,{1\over n},0,0)$ and $(1,{1 \over n+{1\over 2}},0,0)$.

\begin{figure}[H]
\vspace{-15pt}
\caption*{$(x_1,x_2)$}
\vspace{-5pt}

\begin{subfigure}[b]{0.4\textwidth}
\begin{tikzpicture}
  \draw (0,0) node [anchor=north]{$0$};
  \draw (0,0) node [anchor=east]{$0$};
  \draw (2,0) node [anchor=north]{$1$};
  \draw (0,0) -- (2,1.333) node [anchor=west] {$\,\,\,\,\, {1\over n}$};
  \draw (0,0) -- (2,0.666) node [anchor=west] {$\,\,\,\,\, {1\over n+{1\over 2}}$};
  \draw (2,1.333) -- (2,0.666);
  \draw[->, very thick] (2,0.91) -- (2,0.9);
  \draw[->, very thick] (1,0.666) -- (1.02,0.67933);
  \draw[->, very thick] (1.3,0.4329) -- (1.261,0.4199);
\end{tikzpicture}
\caption{loop $g_n$ in $I^2\times\{0\}^2$}
\end{subfigure}
\begin{subfigure}[b]{0.4\textwidth}
\begin{tikzpicture}
  \draw (0,0) node [anchor=north]{$0$};
  \draw (0,0) node [anchor=east]{$0$};
  \draw (2,0) node [anchor=north]{$1$};
  \draw (2,2) node [anchor=west] {$1$} -- (0,0);
  \draw (2,2) -- (2,1.333);
  \draw (0,0) -- (2,1.333);
  \draw (0,0) -- (2,1) node [anchor=west] {$1\over 2$} ;
  \draw (0,0) -- (2,0.8);
  \draw (2,1) -- (2,0.8);
  \draw (0,0) -- (2,0.666) node [anchor=west] {$\,\,\,\,\, {1\over 3}$};
  \draw (0,0) -- (2,0.571);
  \draw (2,0.666) -- (2,0.571);
  \draw (0,0) -- (2,0.5) node [anchor=west] {$\,\,\,\,\,\,\,\,\,\, {1\over 4}$};
  \draw (0,0) -- (2,0.444);
  \draw (2,0.5) -- (2,0.444);
  \draw (2,0.333) node [anchor=west] {$\,\,\,\,\,\,\,\,\,\,\,\,\,\,\,\ddots$};
  \draw (1.8,0.3) node {$\vdots$};
  \draw (0,0) -- (2,0);
\end{tikzpicture}
\caption{$V$ in $I^2\times\{0\}^2$}
\end{subfigure}

\end{figure}

Let
$$
  V=I\times\{0\}^3\cup\bigcup_{n\geq 1}g_n
$$
be the union of the loops $g_n$ for $n\geq 1$ and the interval $I\times\{0\}^3$. Then $V$ is closed and its fundamental group $\pi_1(V)$ is a free group. The elements of its basis are represented by the loops $g_n$ for $n\geq 1$. We use the same symbols $g_n$ to denote the corresponding elements of the fundamental group.

\begin{lemma}\label{lemma-presentation}
  Every countable group $G$ can be presented as
  $$G=\langle g_1,g_2\ldots\mid r_1,r_2,\ldots\rangle$$
  where
  \begin{enumerate}
    \item[(a)] Each relation $r_n$ is of the form $g_ig_jg_k$, $g_ig_j$ or $g_i$ with $i<j<k$.
    \item[(b)] Each generator $g_n$ appears in finitely many relations $r_i$.
  \end{enumerate}
\end{lemma}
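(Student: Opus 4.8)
The plan is to start from the \emph{multiplication table} presentation of $G$ and then repair it so that conditions (a) and (b) hold, the repair being the only real work. Write $G=\{h_0=e,h_1,h_2,\dots\}$. As a first (not yet admissible) presentation, take one generator $[h]$ for each $h\in G$ together with the relations $[e]=1$ and, for every ordered pair $(x,y)$, the length-three positive word $[x]\,[y]\,[(xy)^{-1}]$. A direct check shows this presents $G$: the assignment $[h]\mapsto h$ kills every relator, and conversely the relators force $[g^{-1}]=[g]^{-1}$ (from the pair $(g,g^{-1})$ together with $[e]=1$), hence $[x]\,[y]=[xy]$, so $h\mapsto[h]$ is an inverse homomorphism. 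All relators are already positive of length $\le 3$, so only the ordering condition (a) and the finiteness condition (b) remain. Condition (b) is the genuine obstacle: each element $x$ occurs in the products $(x,y)$ for every $y$, hence in infinitely many relators.

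To defeat (b) I would replace each single generator $[h]$ by infinitely many \emph{copies} $h^{(1)},h^{(2)},\dots$, all intended to represent the same element $h$, and use a fresh, previously unused copy each time an occurrence of $h$ is needed in a relator. Thus every multiplication relator consumes three brand-new generators, so each generator lies in at most one such relator. The copies of a fixed element are then glued together by a \emph{telescoping chain} of length-two relations: for a pair $\{h,h^{-1}\}$ with $h\ne h^{-1}$ I interleave the two families and impose $h^{(1)}(h^{-1})^{(1)}=1$, $(h^{-1})^{(1)}h^{(2)}=1$, $h^{(2)}(h^{-1})^{(2)}=1,\dots$; each such relator maps to $hh^{-1}=e$ or $h^{-1}h=e$, and, reading along the chain, forces the next copy to equal the previous one, so all copies of $h$ collapse to a single element $X_h$ and all copies of $h^{-1}$ to $X_h^{-1}$. (For a self-inverse $h$ one chains the copies of $h$ directly, and $X_h^2=1$ is supplied by the relator of the pair $(h,h)$.) Because a copy sits in at most two consecutive links of its chain and in at most one multiplication relator, every generator now appears in at most three relations, which is exactly (b).

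Condition (a) is then arranged by a bookkeeping choice of indices. Since the length-two relators are of inverse-pair type, $g_ig_j=1$ is equivalent to $g_jg_i=1$, so I may always write the smaller index first; the relator $[e]=1$ has length one and imposes no ordering constraint. The only genuine ordering constraints come from the length-three relators, and because each of these uses three \emph{distinct, freshly introduced} generators, the relevant triples are pairwise disjoint; I simply give the three copies in each such relator increasing indices matching their order in the word. Carrying out the construction in stages, always drawing fresh generators with larger indices, makes all these disjoint local requirements simultaneously satisfiable and produces a global enumeration $g_1,g_2,\dots$ in which every relator has strictly increasing indices.

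Finally, the same computation as for the table presentation shows the resulting group is still $G$. The chains collapse the copies to one element $X_h$ per $h$; the identity relator gives $X_e=1$; the telescoping relators give $X_{h^{-1}}=X_h^{-1}$; and the multiplication relator for $(x,y)$, namely $X_xX_yX_{(xy)^{-1}}=1$, then yields $X_xX_y=X_{xy}$. Hence $h\mapsto X_h$ and $X_h\mapsto h$ are mutually inverse isomorphisms, so the presented group is $G$. The main obstacle throughout is reconciling (b) with the need to encode all of $G$'s infinitely many products while keeping relators short and ordered, and the telescoping chain of fresh copies is the device that resolves it.
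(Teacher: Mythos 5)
Your strategy is essentially the one the paper uses: take infinitely many generator--copies of each element of $G$ (the paper takes a sequence in which every element occurs infinitely often), glue the copies together with length-two relators of inverse-pair type, encode the multiplication table with length-three relators that consume fresh copies, and kill copies of $e$ with length-one relators. The one genuine gap is your treatment of self-inverse elements, and it is a real one. For $h=h^{-1}$ with $h\ne e$, the ``direct'' chain $h^{(1)}h^{(2)},\ h^{(2)}h^{(3)},\dots$ does \emph{not} collapse the copies to a single element: it forces $h^{(k)}=u^{(-1)^{k+1}}$ where $u=h^{(1)}$, so consecutive copies alternately represent $u$ and $u^{-1}$. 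Consequently the relator of the pair $(h,h)$, which you invoke to supply $X_h^2=1$, reads $h^{(a)}h^{(a+1)}e^{(c)}$ under the natural bookkeeping (the two slots are filled with the next two fresh copies, which are adjacent in the chain), and $h^{(a)}h^{(a+1)}=u^{\pm1}u^{\mp1}=1$ is already a consequence of the chain; the relator therefore only repeats $e^{(c)}=1$ and imposes nothing on $u$. Concretely, for $G=\mathbb{Z}/2$ every multiplication relator contains exactly zero or two copies of the nontrivial element, always adjacent in its chain, so every relator becomes vacuous and your presentation yields $\mathbb{Z}$ rather than $\mathbb{Z}/2$.

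The repair is small but must be made explicit: when a relator slot calls for a copy of a self-inverse $h$, draw fresh copies only from a fixed parity class of the chain (say the odd-indexed copies, reserving the even-indexed ones for the chain links), so that every slot evaluates to $u$ rather than $u^{\pm1}$. Then the $(h,h)$ relator genuinely gives $u^2=1$, the whole chain collapses to a single $X_h$ with $X_h=X_h^{-1}$, and the rest of your verification goes through. (The same care is needed, and is left implicit, in the paper's instruction to ``choose $g_i$, $g_j$ and $g_k$ which, so far, have not appeared in relations of the third type''.) Everything else in your write-up is correct and matches the paper's argument: the zigzag collapse of copies for $h\ne h^{-1}$, the count of at most three or four occurrences per generator giving (b), and the indexing for (a) --- a length-two relator may be cyclically rotated so the smaller index comes first, while each length-three relator uses a disjoint triple of fresh generators that you are free to index increasingly.
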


\begin{proof}
  Let $(g_n)_{n=1}^\infty$ be a sequence in which every element of $G$ occurs infinitely many times. We add relations as follows. Whenever $g_i=e$ in $G$ we add a relation $g_i$. For every $i$ we choose the least $j$ such that $i<j$ and $g_i=g_j^{-1}$, and we add $g_ig_j$. Finally, for every identity $ab=c$ in $G$ we choose $g_i$, $g_j$ and $g_k$ which, so far, have not appeared in relations of the third type, $i<j<k$, $g_i=a$, $g_j=b$ and $g_k=c^{-1}$. We add $g_ig_jg_k$. These relations reconstruct the multiplication table of $G$ and each generator appears in at most four relations. We label the relations by natural numbers in any way we want.
\end{proof}

Fix a presentation of $G$ as in Lemma \ref{lemma-presentation}.
For every relation $r_n$ we define a subset $R_n\subseteq I^2\times\{{1\over n}\}\times I$. We illustrate the case $r_n=g_ig_j$ by drawing several sections of $R_n$ with planes of the form $I^2\times\{{1\over n}\}\times\{{k\over 4}\}$ where $k=0,1,2,3,4$. The other cases are analogous. Note that the proportions are distorted in order to improve readability of the drawing.

\begin{figure}[H]
\vspace{-15pt}
\caption*{$(x_1,x_2)$}
\vspace{-0pt}

\begin{subfigure}[b]{0.19\textwidth}
\begin{tikzpicture}
  \draw (0,0) -- (2,2);
  \draw (0,0) -- (2,1.5);
  \draw (0,0) -- (2,1);
  \draw (0,0) -- (2,0.5);
  \draw (2,2) -- (2,1.5);
  \draw (2,1) -- (2,0.5);
  \draw (2,1.75) node [anchor=west] {$g_i$};
  \draw (2,0.75) node [anchor=west] {$g_j$};
\end{tikzpicture}
\caption*{$\scriptstyle \! R_n\cap I^2\times\{{1\over n}\}\times\{0\}$}
\end{subfigure}
\begin{subfigure}[b]{0.19\textwidth}
\begin{tikzpicture}
  \draw (0,0) -- (2,2);
  \draw (0.5,0.375) -- (2,1.5);
  \draw (0.5,0.25) -- (2,1);
  \draw (0.5,0.375) -- (0.5,0.25);
  \draw (0,0) -- (2,0.5);
  \draw (2,2) -- (2,1.5);
  \draw (2,1) -- (2,0.5);
\end{tikzpicture}
\caption*{$\scriptstyle \! R_n\cap I^2\times\{{1\over n}\}\times\{{1\over 4}\}$}
\end{subfigure}
\begin{subfigure}[b]{0.19\textwidth}
\begin{tikzpicture}
  \draw (0,0) -- (2,2);
  \draw (1,0.75) -- (2,1.5);
  \draw (1,0.5) -- (2,1);
  \draw (1,0.75) -- (1,0.5);
  \draw (0,0) -- (2,0.5);
  \draw (2,2) -- (2,1.5);
  \draw (2,1) -- (2,0.5);
\end{tikzpicture}
\caption*{$\scriptstyle \! R_n\cap I^2\times\{{1\over n}\}\times\{{1\over 2}\}$}
\end{subfigure}
\begin{subfigure}[b]{0.19\textwidth}
\begin{tikzpicture}
  \draw (0,0) -- (2,2);
  \draw (1.5,1.125) -- (2,1.5);
  \draw (1.5,0.75) -- (2,1);
  \draw (1.5,1.125) -- (1.5,0.75);
  \draw (0,0) -- (2,0.5);
  \draw (2,2) -- (2,1.5);
  \draw (2,1) -- (2,0.5);
\end{tikzpicture}
\caption*{$\scriptstyle \! R_n\cap I^2\times\{{1\over n}\}\times\{{3\over 4}\}$}
\end{subfigure}
\begin{subfigure}[b]{0.19\textwidth}
\begin{tikzpicture}
  \filldraw[black!40!white] (0,0) -- (2,2) -- (2,0.5) -- (0,0);
\end{tikzpicture}
\caption*{$\scriptstyle \! R_n\cap I^2\times\{{1\over n}\}\times\{1\}$}
\end{subfigure}

\end{figure}

We first give a not yet correct anticipation of the construction: Let $W'=V\times I \times\{0\}$ and let $K'=W'\cup\bigcup_{n\geq 1}R_n$. Then $\pi_1(W')\cong\pi_1(V)$ is the free group with basis $\{g_n\}_{n\geq 1}$. Since $W'$ is a neighborhood deformation retract of $K'$ we may use Seifert--van Kampen's Theorem to see that $\pi_1(K')\cong G$. However $K'$ is not closed, and its closure wrecks its fundamental group. We need a bit skinnier replacement for $W'$:

Define $m:\mathbb{N}\arr\mathbb{N}$ by letting $m(n)$ be the least integer such that $g_n$ appears in no $r_m$ for $m>m(n)$. Then $m$ is well defined by Lemma \ref{lemma-presentation}(b).

If we look at each loop $g_n\subseteq V$ separately we see that we do not need to ``fatten'' it to the whole of $g_n\times I\times\{0\}$. In order to attach all the relevant relations, it is enough to take $g_n\times [{1\over m(n)},1]\times\{0\}$. Thus we define
$$
  W'' = I\times \{0\}\times I\times\{0\}\cup\bigcup_{n\geq 1} g_n\times\left[{1\over m(n)},1\right]\times\{0\}
$$
We see that $W''$ is closed. Still $\pi_1(W'')\cong\pi_1(V)$. Our gain is that the intersection $W''\cap I^2\times\{0\}\times I=I\times\{0\}^3$ is now contractible so that if $W=W''\cup I^2\times\{0\}^2$ then $\pi_1(W)\cong\pi_2(W'')\cong\pi_1(V)$ is still the free group on $\{g_n\}_{n\geq 1}$. Define

$$
  M=I^2\times\{0\}\times I\cup W \mbox{ \ \ \ and \ \ \ } K=M\cup\bigcup_{n\geq 1}R_n\,.
$$

Below we show the projections of $W''$, $M$ and $K$ onto $\{0\}^2\times I^2$:

\begin{figure}[H]
\vspace{-15pt}
\caption*{$(x_3,x_4)$}
\vspace{-0pt}

\begin{subfigure}[b]{0.3\textwidth}
\begin{tikzpicture}
  \draw[thick] (0,0) -- (2,0);
  \filldraw (2,0) circle (2pt);
  \filldraw (0,0) circle (2pt);
  \draw (0,0) node [anchor=north] {$0$};
  \draw (0,0) node [anchor=east] {$0$};
  \draw (2,0) node [anchor=north] {$1$};
\end{tikzpicture}
\caption*{$W$}
\end{subfigure}
\begin{subfigure}[b]{0.3\textwidth}
\begin{tikzpicture}
  \draw[thick] (0,0) -- (2,0);
  \draw[thick] (0,0) -- (0,2);
  \filldraw (2,0) circle (2pt);
  \filldraw (0,0) circle (2pt);
  \draw (0,0) node [anchor=north] {$0$};
  \draw (0,0) node [anchor=east] {$0$};
  \draw (2,0) node [anchor=north] {$1$};
\end{tikzpicture}
\caption*{$M$}
\end{subfigure}
\begin{subfigure}[b]{0.3\textwidth}
\begin{tikzpicture}
  \draw[thick] (0,0) -- (2,0);
  \draw[thick] (0,0) -- (0,2);
  \draw[thick] (2,0) -- (2,2);
  \draw[thick] (1,0) -- (1,2);
  \draw[thick] (0.666,0) -- (0.666,2);
  \draw[thick] (0.5,0) -- (0.5,2);
  \draw[thick] (0.4,0) -- (0.4,2);
  \draw[thick] (0.4,0) -- (0.4,2);
  \draw[thick] (0.333,0) -- (0.333,2);
  \draw[thick] (0.2857,0) -- (0.2857,2);
  \draw (0.14,1) node {$\scriptstyle\ldots$};
  \filldraw (2,0) circle (2pt);
  \filldraw (0,0) circle (2pt);
  \draw (0,0) node [anchor=north] {$0$};
  \draw (0,0) node [anchor=east] {$0$};
  \draw (2,0) node [anchor=north] {$1$};
\end{tikzpicture}
\caption*{$K$}
\end{subfigure}

\end{figure}

The left solid dots represent $I^2\times\{0\}^2$. The right ones represent $V$. The left vertical wall in the central figure is the ``compactification wall'', $I^2\times\{0\}\times I$. Clearly $W$ is a deformation retract of $M$, hence $\pi_1(M)\cong\pi_1(V)$ is free on $g_n$'s. In the right figure we add the $R_n$'s. The closedness of $W$ implies the closedness of $K$. Since $W$ is a neighborhood deformation retract of $K$ we use Seifert--van Kampen's Theorem to prove that $\pi_1(K)\cong G$. Theorem \ref{theorem-main} is proved.

We leave it to the reader to notice that the construction above generalizes to the following.
\begin{exercise}
  For every positive integer $n$ and a countable Abelian group $G$ there exists a compact path connected subspace $K\subseteq\mathbb{R}^{n+3}$ such that $\pi_n(K)\cong G$.
\end{exercise}

\end{document}